\theoremstyle{plain}
\newtheorem{theorem}{Theorem}[section]
\newtheorem{lemma}[theorem]{Lemma}
\newtheorem{corollary}[theorem]{Corollary}
\theoremstyle{definition}
\newcommand{\Z}{\mathbb{Z}}
\newcommand{\C}{\mathbb{C}}
\newcommand{\CP}{\mathbb{C}P}
\newcommand{\mathscr}{}
\def\w#1{w_{#1}}
\begin{document}%
\title{Strong cohomological rigidity of a product of projective spaces}
\author {Suyoung Choi}
\address{Department of Mathematics, Osaka City University, Sugimoto, Sumiyoshi-ku, Osaka 558-8585, Japan}
\email{choi@sci.osaka-cu.ac.jp}
\urladdr{http://math01.sci.osaka-cu.ac.jp/~choi}

\author{Dong Youp Suh}
\address{Department of Mathematical Sciences, KAIST, 335 Gwahangno, Yu-sung Gu, Daejeon 305-701, Korea}
\email{dysuh@math.kaist.ac.kr}

\thanks{The first author is supported by the Japanese Society for the Promotion of Sciences (JSPS grant no. P09023).}
\thanks{The second author is partially supported by Basic Science Research Program through the national Research Foundation of Korea(NRF) founded by the Ministry of Education, Science and Technology (2009-0063179).}

\subjclass[2000]{Primary 57S25; Secondary 22F30}
\keywords{product of projective spaces, strong cohomological rigidity, toric manifold, quasitoric manifold}

\date{\today}
\maketitle

\begin{abstract}
We prove that for a toric manifold $M$, any graded ring isomorphism $H^\ast(M) \to H^\ast(\prod_{i=1}^{m}\CP^{n_i})$ is induced by a diffeomorphism $\prod_{i=1}^m \CP^{n_i} \to M$.
\end{abstract}

\section{Introduction}
The \emph{cohomological rigidity problem} for toric manifolds asks whether the integral cohomology ring of a toric manifold determines its topological type or not. So far, there is no negative answer to the question but some positive results. In \cite{Ch-Ma-Su-2010}, the authors with M. Masuda show that if $M$ is a toric manifold whose cohomology ring is isomorphic to that of $\prod_{i=1}^m \CP^{n_i}$, a product of complex projective spaces, then $M$ is actually diffeomorphic to $\prod_{i=1}^m \CP^{n_i}$, which gives a positive result to the cohomological rigidity problem.

On the other hand, one might ask a stronger question as follows. Let $M$ and $N$ be two toric manifolds. Suppose there exists a graded ring isomorphism $\varphi \colon H^\ast(N \colon \Z) \to H^\ast(M \colon \Z)$. Then does there exist a diffeomorphism (or homeomorphism) $g \colon M \to N$ such that $g^\ast = \varphi$?

We call this question the \emph{strong cohomological rigidity problem}. This problem was actually suggested by the referee of the paper \cite{Ch-Ma-Su-2010}, which we could not answer until the paper \cite{Ch-Ma-Su-2010} was set for publication.

In this article, we would like to answer the strong cohomological rigidity problem for the case when $N=\prod_{i=1}^m \CP^{n_i}$. Namely, we prove the following theorem.

\begin{theorem}\label{theorem:rigidity of product of projective space}
Let $M$ be a toric manifold. If there is a graded ring isomorphism $\varphi : H^\ast(M) \rightarrow H^\ast(\prod_{i=1}^m \C P^{n_i})$, then there is a diffeomorphism $f : \prod_{i=1}^m \C P^{n_i} \rightarrow M$ such that $f^\ast=\varphi$.
\end{theorem}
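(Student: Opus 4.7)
The plan is to split the proof into two stages: a reduction to a realization problem, and then the realization itself via a classification of graded ring automorphisms of $H^*(\prod \CP^{n_i})$.

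For the reduction, I would first invoke the cohomological rigidity theorem of \cite{Ch-Ma-Su-2010}, which under our hypothesis already guarantees \emph{some} diffeomorphism $g \colon \prod_{i=1}^m \CP^{n_i} \to M$. Writing the desired $f$ as $g \circ h$ for a self-diffeomorphism $h$ of $\prod \CP^{n_i}$, the condition $f^* = \varphi$ becomes $h^* = \varphi \circ (g^*)^{-1}$, which is an arbitrary graded ring automorphism $\psi$ of $H^*(\prod \CP^{n_i})$. Thus it suffices to realize every such $\psi$ by some self-diffeomorphism of $\prod \CP^{n_i}$.

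For the classification, let $R = H^*(\prod \CP^{n_i}) = \Z[x_1, \ldots, x_m]/(x_1^{n_1+1}, \ldots, x_m^{n_m+1})$ and write $\psi(x_j) = \sum_i a_{ij} x_i$. A key observation, verified by inspecting the monomial basis of $R$, is that for any $y = \sum c_i x_i \in H^2$ the largest $k$ with $y^k \neq 0$ equals $\sum_{i:\, c_i \neq 0} n_i$. Since $\psi$ is an isomorphism of graded rings it preserves this invariant, so $\sum_{i \in S_j} n_i = n_j$ where $S_j = \{i : a_{ij} \neq 0\}$. Summing over $j$ and setting $T_i = \{j : a_{ij} \neq 0\}$ yields $\sum_i n_i |T_i| = \sum_j n_j = \sum_i n_i$; since invertibility of $(a_{ij})$ forces $|T_i| \geq 1$ and all $n_i \geq 1$, this collapses to $|T_i| = 1$ for every $i$. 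Consequently $(a_{ij})$ is a monomial matrix; its $\pm 1$ determinant forces every nonzero entry to be $\pm 1$, and the order condition forces the underlying permutation $\sigma$ to preserve the partition of indices by the value of $n_i$. In short, $\psi(x_j) = \epsilon_j x_{\sigma(j)}$ with $\epsilon_j \in \{\pm 1\}$ and $n_{\sigma(j)} = n_j$.

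Finally, every such $\psi$ is manifestly geometric: the sign change $x_i \mapsto -x_i$ is induced by complex conjugation $[z_0 : \cdots : z_{n_i}] \mapsto [\bar z_0 : \cdots : \bar z_{n_i}]$ on the $i$-th factor, and the block-preserving permutation $\sigma$ is induced by literally permuting the factors of $\prod \CP^{n_i}$ (legitimate precisely because $\sigma$ only interchanges factors of equal dimension). Composing these elementary self-diffeomorphisms yields the required $h$, and $f = g \circ h$ then satisfies $f^* = \varphi$. The main obstacle is the classification step, specifically the order computation on $H^2$ together with the row-support bookkeeping that rigidifies $(a_{ij})$ into a block-preserving signed permutation.
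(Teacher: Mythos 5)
Your proposal is correct, and its overall skeleton is the same as the paper's: invoke the cohomological rigidity theorem of \cite{Ch-Ma-Su-2010} to get \emph{some} diffeomorphism, reduce the problem to realizing an arbitrary graded ring automorphism $\psi$ of $H^\ast(\prod_{i=1}^m \CP^{n_i})$ by a self-diffeomorphism, classify such $\psi$ as signed, dimension-preserving permutations of the generators, and compose. Where you genuinely diverge is in the proof of the classification step (the paper's Lemma~\ref{lemma:lemma1}). The paper proves only the weaker Lemma~\ref{lemma:lemma0} (that $y^{n_i}\neq 0$ whenever the coefficient of $x_i$ in $y$ is nonzero), uses it to get $b_{ij}=0$ when $n_i<n_j$, conjugates the matrix of $\psi$ into block upper triangular form, and then analyzes each block by expanding $\psi(x_k)^{N_\ell+1}=(z_\ell+w_\ell)^{N_\ell+1}$, where the step asserting that the resulting equation forces $w_\ell=0$ is left rather informal. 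You instead use the sharp nilpotency-order formula --- the largest $k$ with $y^k\neq 0$ equals $\sum_{i\in S} n_i$ over the support $S$ of $y$, an easy multinomial-coefficient-plus-pigeonhole verification that subsumes Lemma~\ref{lemma:lemma0} --- note that any ring isomorphism preserves this invariant, and then rigidify the matrix $(a_{ij})$ by the double count $\sum_i n_i |T_i| = \sum_i n_i$, which forces each row to have a single nonzero entry and hence (by invertibility) the whole matrix to be a signed permutation matrix preserving the partition by dimension. This buys you a cleaner and more self-contained argument: no triangularization, no block-by-block expansion, and no informally justified cancellation step; the counting argument does all the work at once. What the paper's route buys in exchange is only that it gets by with the cruder non-vanishing statement of Lemma~\ref{lemma:lemma0} rather than the exact order formula, a negligible saving. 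Your realization step (complex conjugation on a factor for the sign $x_i\mapsto -x_i$, permutation of equal-dimensional factors for $\sigma$) also makes explicit what the paper's Corollary~\ref{coro:automorphism_of_trivial_case} dismisses as clear, and your final bookkeeping $f=g\circ h$, $f^\ast=h^\ast\circ g^\ast=\varphi$ matches the paper's computation with $h=g^{-1}$ in its notation.
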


Combining Theorem~\ref{theorem:rigidity of product of projective space} with \cite[Theorem 8.1]{Ch-Ma-Su-2010-2}, we obtain
the following corollary which generalizes \cite[Theorem 5.1]{Ma-Pa-2009} treating the case where $n_i=1$ for any $i$.
Note that a quasitoric manifold is a topological analogue of toric manifold, which was introduced by Davis and
 Januszkiewicz in \cite{DJ}.

\begin{corollary}
Let $M$ be a quasitoric manifold. If there is a graded ring isomorphism $\varphi : H^\ast(M) \rightarrow H^\ast(\prod_{i=1}^m \C P^{n_i})$, then there is a homeomorphism $f : \prod_{i=1}^m \C P^{n_i} \rightarrow M$ such that $f^\ast=\varphi$.
\end{corollary}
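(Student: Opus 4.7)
The plan is to combine Theorem~\ref{theorem:rigidity of product of projective space} with Theorem 8.1 of \cite{Ch-Ma-Su-2010-2}. I expect that cited theorem to assert that any quasitoric manifold whose integral cohomology ring is isomorphic to $H^\ast(\prod_{i=1}^m \CP^{n_i})$ is in fact homeomorphic to some toric manifold (in the spirit of the Masuda--Panov result \cite{Ma-Pa-2009} treating $n_i=1$). Granted this reduction, the strong cohomological rigidity for quasitoric manifolds becomes a formal consequence of the toric case.

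Concretely, I would first apply Theorem 8.1 of \cite{Ch-Ma-Su-2010-2} to the given quasitoric manifold $M$ to obtain a homeomorphism $h : M \to M'$ onto a toric manifold $M'$. The induced map $h^\ast : H^\ast(M') \to H^\ast(M)$ is a graded ring isomorphism, so the composite $\psi := \varphi \circ h^\ast : H^\ast(M') \to H^\ast(\prod_{i=1}^m \CP^{n_i})$ is a graded ring isomorphism between the cohomology rings of two toric manifolds. Applying Theorem~\ref{theorem:rigidity of product of projective space} to $M'$ and $\psi$ will then yield a diffeomorphism $f' : \prod_{i=1}^m \CP^{n_i} \to M'$ with $(f')^\ast = \psi$. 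Setting $f := h^{-1} \circ f'$ will give a homeomorphism $\prod_{i=1}^m \CP^{n_i} \to M$ satisfying
\[
f^\ast = (f')^\ast \circ (h^{-1})^\ast = \psi \circ (h^\ast)^{-1} = \varphi,
\]
as required.

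The only real obstacle is pinning down the precise form of Theorem 8.1 in \cite{Ch-Ma-Su-2010-2}: we need just \emph{some} homeomorphism from $M$ to a toric manifold, with no prior constraint on its effect on cohomology, after which the freedom to pre-compose $\varphi$ with $h^\ast$ absorbs whatever isomorphism $h$ happens to induce. Once that input is available, no further geometric work is required; the argument is pure bookkeeping that routes the problem through the main theorem.
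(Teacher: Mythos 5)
Your proposal is correct and is exactly the combination the paper intends: the paper proves this corollary by citing Theorem 8.1 of \cite{Ch-Ma-Su-2010-2} (which gives a homeomorphism from the quasitoric $M$ to a toric manifold --- in fact to $\prod_{i=1}^m \CP^{n_i}$ itself) and then invoking Theorem~\ref{theorem:rigidity of product of projective space}, with the same transport of $\varphi$ through $h^\ast$ that you spell out. Your bookkeeping $f = h^{-1}\circ f'$, $f^\ast = \psi\circ(h^\ast)^{-1} = \varphi$ is precisely the argument left implicit in the paper.
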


\section{Proof of Theorem \ref{theorem:rigidity of product of projective space}}
Let $R= \Z[x_1, \ldots, x_m] / <x_i^{n_i+1}\colon i=1, \ldots, m> \cong H^\ast(\prod_{i=1}^{m} \CP^{n_i})$.
\begin{lemma}\label{lemma:lemma0}
Let $y=\sum_{j=1}^m a_j x_j \in R$ such that $a_i \neq 0$ for some $i$. Then $y^{n_i} \neq 0$ in $R$.
\end{lemma}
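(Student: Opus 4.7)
The plan is to expand $y^{n_i}$ via the multinomial theorem and isolate one specific coefficient in the standard monomial basis of $R$.

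First I would recall that $R$ is a free $\Z$-module with basis the monomials $x_1^{k_1}\cdots x_m^{k_m}$ satisfying $0 \le k_j \le n_j$ for each $j$; in particular $x_i^{n_i}$ is itself a basis element. This is the standard description of the cohomology of a product of projective spaces.

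Next, lift $y$ to $\tilde y = \sum_j a_j x_j$ in the polynomial ring $\Z[x_1,\dots,x_m]$ and apply the multinomial theorem:
\[
\tilde y^{\,n_i} \;=\; \sum_{k_1+\cdots+k_m=n_i}\binom{n_i}{k_1,\ldots,k_m}\,a_1^{k_1}\cdots a_m^{k_m}\;x_1^{k_1}\cdots x_m^{k_m}.
\]
When passing to $R$, any term with some $k_j>n_j$ becomes zero, while the remaining terms are distinct basis elements listed above. Hence the coefficient of the basis vector $x_i^{n_i}$ in $y^{n_i}$ is contributed solely by the multi-index $(k_1,\ldots,k_m)$ with $k_i=n_i$ and $k_j=0$ for $j\ne i$, which yields the coefficient $a_i^{n_i}$.

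Since $\Z$ is an integral domain and $a_i \ne 0$ by hypothesis, $a_i^{n_i}\ne 0$, so $y^{n_i}$ has a nonzero component on a basis vector and is therefore nonzero in $R$. There is no real obstacle here; the only thing to be careful about is to verify that no multi-index other than $(0,\ldots,n_i,\ldots,0)$ can contribute to the coefficient of $x_i^{n_i}$, which is immediate from the linear independence of the standard basis monomials in $R$.
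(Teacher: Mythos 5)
Your proof is correct and is essentially the paper's argument: both hinge on the observation that the coefficient of $x_i^{n_i}$ in the expansion of $y^{n_i}$ is $a_i^{n_i}\neq 0$, and that this monomial cannot lie in (or be cancelled by) the monomial ideal $\langle x_j^{n_j+1}\rangle$. The only difference is presentational, as you argue directly via the standard monomial basis of $R$ while the paper runs the same computation as a proof by contradiction using ideal membership.
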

\begin{proof}
Suppose $y^{n_i} =0$ on the contrary. Then $y^{n_i}$ must lie in the ideal generated by the polynomials $x_j^{n_j +1}$ for $j=1, \ldots, n$. Since $a_i \neq 0$, $y^{n_i} = (\sum_{j=1}^m a_j x_j)^{n_i}$ must contain the nonzero monomial term of $x_i^{n_i}$. However if a nonzero multiple of a power of $x_i$ appear in the ideal generated by $x_j^{n_j+1}$ for $j=1, \ldots, m$, then the exponent must be at least $n_i+1$, which is a contradiction.
\end{proof}

\begin{lemma}\label{lemma:lemma1}
If $\psi$ is a graded ring automorphism on $R$, then there exists a permutation $\sigma$ on $\{1, \ldots, m\}$ such that $n_i = n_{\sigma(i)}$ for all $i=1, \ldots, m$ and $\psi(x_i) = \pm x_{\sigma(i)}$.
\end{lemma}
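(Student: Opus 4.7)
The plan is to combine Lemma~\ref{lemma:lemma0} with a direct analysis of the relation $\psi(x_i)^{n_i+1}=0$ to force each $\psi(x_i)$ to be supported on a single variable. Since $\psi$ is graded, write $\psi(x_i)=\sum_{j=1}^m a_{ij}x_j$. Applying Lemma~\ref{lemma:lemma0} to $\psi(x_i)$ gives $\psi(x_i)^{n_j}\neq 0$ whenever $a_{ij}\neq 0$; combined with $\psi(x_i)^{n_i+1}=\psi(x_i^{n_i+1})=0$, this forces $n_j\le n_i$ for every $j$ with $a_{ij}\neq 0$. Running the same argument for $\psi^{-1}$ shows that $\psi$ preserves the filtration $F_k:=\sum_{n_j\le k}\Z x_j$ of the degree-$2$ part in both directions, so $\psi^{-1}(F_{n_i-1})=F_{n_i-1}$, while the basis element $x_i$ does not lie in $F_{n_i-1}$.

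Next I would extract the ``top part'' of $\psi(x_i)$. Passing to the quotient $R/(x_j:n_j<n_i)$, the image $\bar y:=\sum_{n_j=n_i}a_{ij}\bar x_j$ lies in the subring $S:=\Z[x_j:n_j=n_i]/(x_j^{n_i+1})$ and still satisfies $\bar y^{n_i+1}=0$. A direct multinomial expansion in $S$ shows that if two coefficients $a_{ij_1},a_{ij_2}$ with $n_{j_1}=n_{j_2}=n_i$ were simultaneously nonzero, then the basis monomial $\bar x_{j_1}\bar x_{j_2}^{\,n_i}$ would appear in $\bar y^{\,n_i+1}$ with coefficient $(n_i+1)a_{ij_1}a_{ij_2}^{n_i}\neq 0$, a contradiction. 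So at most one $a_{ij}$ with $n_j=n_i$ is nonzero; and it cannot happen that all of them vanish, since otherwise $\psi(x_i)\in F_{n_i-1}$ together with the filtration statement above would force $x_i\in F_{n_i-1}$. Denote the unique surviving index by $\sigma(i)$ (so $n_{\sigma(i)}=n_i$), its coefficient by $\epsilon_i$, and write $\psi(x_i)=\epsilon_i x_{\sigma(i)}+v$ with $v:=\sum_{n_j<n_i}a_{ij}x_j$.

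The crux is to show $v=0$. Expanding
\[
0=\psi(x_i)^{n_i+1}=\sum_{l=0}^{n_i+1}\binom{n_i+1}{l}\epsilon_i^l\,x_{\sigma(i)}^l\,v^{\,n_i+1-l},
\]
the $l=n_i+1$ summand vanishes because $x_{\sigma(i)}^{n_i+1}=0$, and every summand with $l\le n_i-1$ has $x_{\sigma(i)}$-degree strictly less than $n_i$. Hence for $j$ with $n_j<n_i$, only the $l=n_i$ summand can contribute to the basis monomial $x_{\sigma(i)}^{n_i}x_j$, giving coefficient $(n_i+1)\epsilon_i^{n_i}a_{ij}=0$ and so $a_{ij}=0$. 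Therefore $\psi(x_i)=\epsilon_i x_{\sigma(i)}$; the matrix of $\psi$ on the degree-$2$ part is then a monomial matrix with entry $\epsilon_i$ in position $(\sigma(i),i)$, and since $\psi$ is an automorphism this matrix must lie in $\mathrm{GL}_m(\Z)$, which forces $\sigma$ to be a permutation of $\{1,\dots,m\}$ and each $\epsilon_i=\pm 1$. The most delicate step, I expect, is the top-term extraction in the second paragraph, where one has to check that the reduction to the ``all exponents equal'' subring $S$ faithfully captures the vanishing relation $\psi(x_i)^{n_i+1}=0$; once that is in place, the rest is clean multinomial bookkeeping.
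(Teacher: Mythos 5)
Your proof is correct, and its skeleton coincides with the paper's: both use Lemma~\ref{lemma:lemma0} to show that $\psi(x_i)$ is supported on the $x_j$ with $n_j\le n_i$, both split $\psi(x_i)$ into a ``top'' part (indices with $n_j=n_i$) plus a strictly lower part, and both exploit the expansion of $\psi(x_i)^{n_i+1}=0$ to kill the unwanted terms. Where you genuinely diverge is in two sub-arguments, and in both places your version is the tighter one. First, the paper deduces that the top part $z_\ell$ is nonzero from $\det B=\pm 1$ together with the block upper triangular form of $B=(b_{ij})$; you instead deduce it from invariance of the filtration $F_k=\sum_{n_j\le k}\Z x_j$ under both $\psi$ and $\psi^{-1}$, which avoids the matrix bookkeeping altogether. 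Second, and more significantly, the paper kills the lower part $w_\ell$ \emph{first}, by asserting that the equation $-\binom{N_\ell+1}{1}w_\ell z_\ell^{N_\ell}=z_\ell^{N_\ell+1}+\binom{N_\ell+1}{2}w_\ell^2 z_\ell^{N_\ell-1}+\cdots$ is impossible in $R$ unless $w_\ell=0$ --- an assertion stated without justification (implicitly it rests on the bigrading of $R$ by degree in the $J_\ell$-variables versus the remaining variables) --- and only afterwards extracts the single surviving coefficient from $z_\ell^{N_\ell+1}=0$. You reverse the order: you first reduce the top part to a single monomial $\epsilon_i x_{\sigma(i)}$ via the quotient $S$ and the multinomial coefficient $(n_i+1)a_{ij_1}a_{ij_2}^{n_i}$, and then kill $v$ by reading off the coefficient of the basis monomial $x_{\sigma(i)}^{n_i}x_j$. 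This ordering is precisely what makes the coefficient extraction unambiguous (each summand $x_{\sigma(i)}^l v^{n_i+1-l}$ has $x_{\sigma(i)}$-degree exactly $l$, and reduction modulo a monomial ideal never conflates distinct monomials), so your write-up in effect supplies the justification the paper elides. Finally, the step you flagged as delicate is harmless: the quotient map $R\to R/(x_j\colon n_j<n_i)$ is a ring homomorphism, and $S$ embeds in that quotient as a direct summand spanned by monomials, so the relation $\bar y^{\,n_i+1}=0$ both descends to the quotient and is faithfully detected in $S$.
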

\begin{proof}
Let $\psi(x_i) = \sum_{j=1}^{m} b_{ij}x_j$ for $i=1, \ldots, m$. Since $\psi$ is an automorphism, $\det B = \pm 1$, where $B = (b_{ij})$. Note that the positive integers $n_1,\ldots, n_m$ need not be distinct.
Let $S = \{N_1, \ldots, N_k \mid N_1 > \cdots > N_k\}$ be the set of distinct numbers from $n_1, \ldots, n_m$, and let $\mu \colon \{1, \ldots, m\} \to S$ be the function defined by $\mu(i)=n_i$. Let $J_\ell = \mu^{-1}(N_\ell)$ for $\ell = 1, \ldots, k$.

\textbf{Claim :} {\em $B$ is conjugate to a block upper triangular matrix by a permutation matrix.}

Since $x_i^{n_i+1}=0$ in $R$, $0=\psi(x_i^{n_i+1}) = (\sum_{j=1}^{m} b_{ij}x_j)^{n_i +1}$. Therefore, by Lemma~\ref{lemma:lemma0}, $b_{ij}=0$ if $n_i < n_j$. Hence by an appropriate permutation of the index set $\{1, \ldots, m\}$, we may assume that $n_1 \geq n_2 \geq \cdots \geq n_m$ and $B$ is an upper triangular matrix of the form
$$
    \left(
      \begin{array}{cccc}
        C_{J_1} &  &  & \ast  \\
         & C_{J_2} &  &  \\
         &  & \ddots &  \\
        0 &  &  & C_{J_k} \\
      \end{array}
    \right),
$$ where $C_{J_\ell}$ is the matrix formed from $b_{ij}$ with $i,j \in J_\ell$. This proves the claim.

Now let $J_{< \ell} = \bigcup_{\{N\in S\mid N<N_\ell\}} \mu^{-1}(N)$. By the previous claim, if $k \in J_\ell$, then we may write $\psi(x_k) = \sum_{j\in J_\ell} b_{kj}x_j + \sum_{j\in J_{<\ell}} b_{kj}x_j$. Let us denote $z_\ell = \sum_{j \in J_\ell} b_{kj}x_j$ and $\w{\ell} = \sum_{j \in J_{<\ell}}b_{kj}x_j$ for simplicity. Then $\psi(x_k) = z_\ell + \w{\ell}$. Therefore,
$$
    0 = \psi(x_k^{N_\ell+1}) = z_\ell^{N_\ell +1} + \binom{N_\ell +1}{1} \w{\ell}z_\ell^{N_\ell} + \binom{N_\ell +1}{2} \w{\ell}^2z_\ell^{N_\ell-1} + \cdots.
$$
We note that $z_\ell \neq 0$ since $\det B = \pm 1$. On the other hand, there is no way to get the polynomial equation
$$
    -\binom{N_\ell +1}{1} \w{\ell}z_\ell^{N_\ell} = z_\ell^{N_\ell +1} + \binom{N_\ell +1}{2} \w{\ell}^2z_\ell^{N_\ell-1} + \cdots
$$ in the ring $R$ unless $\w{\ell}=0$. Hence $z_\ell^{N_\ell+1} =0$. But then there is a unique nonzero $b_{ij}$ for $i \in J_\ell$, and hence $b_{ij} = \pm 1$.

Therefore, we have shown that $B$ is conjugate to a diagonal matrix all of whose diagonal entries are $\pm 1$. Therefore if $k\in J_\ell$, then $\psi$ sends $x_k$ to $\pm x_i$ for some $i \in J_\ell$.
\end{proof}

\begin{corollary} \label{coro:automorphism_of_trivial_case}
Any graded ring automorphism $\psi$ on $H^\ast(\prod_{i=1}^m \CP^{n_i})$ is induced by a self-diffeomorphism $g$ on $\prod_{i=1}^m \CP^{n_i}$, i.e., $g^\ast = \psi$.
\end{corollary}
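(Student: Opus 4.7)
The plan is to realize every graded ring automorphism $\psi$ of $R$ described by Lemma~\ref{lemma:lemma1} as the pullback of an explicit self-diffeomorphism. By that lemma, $\psi$ is determined by a permutation $\sigma$ of $\{1,\ldots,m\}$ with $n_{\sigma(i)}=n_i$ and signs $\epsilon_i\in\{\pm 1\}$, via $\psi(x_i)=\epsilon_i x_{\sigma(i)}$. I would factor such a $\psi$ as $\psi=\psi_1\circ\psi_2$, where $\psi_2(x_i)=\epsilon_i x_i$ and $\psi_1(x_i)=x_{\sigma(i)}$, and realize each piece separately by constructing diffeomorphisms $g_1,g_2$ with $g_j^{\ast}=\psi_j$. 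Contravariance then gives $(g_2\circ g_1)^{\ast}=g_1^{\ast}\circ g_2^{\ast}=\psi_1\circ\psi_2=\psi$.

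For $\psi_1$, the condition $n_{\sigma(i)}=n_i$ means that the factors can genuinely be interchanged: the map $g_1\colon \prod_i \CP^{n_i}\to\prod_i \CP^{n_i}$ sending $(p_1,\ldots,p_m)$ to $(p_{\sigma^{-1}(1)},\ldots,p_{\sigma^{-1}(m)})$ is a diffeomorphism, and the K\"unneth formula immediately yields $g_1^{\ast}(x_i)=x_{\sigma(i)}$.

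For $\psi_2$, I would use complex conjugation $c\colon \CP^{n}\to \CP^{n}$, $[z_0:\cdots:z_n]\mapsto[\bar z_0:\cdots:\bar z_n]$. This is a smooth involution whose restriction to the standard $\CP^1\subset\CP^n$ is the orientation-reversing conjugation on $S^2$, so it acts as $-1$ on $H^2(\CP^n;\Z)$. Applying $c$ on exactly those factors with $\epsilon_i=-1$ and the identity on the remaining factors produces a diffeomorphism $g_2$ with $g_2^{\ast}(x_i)=\epsilon_i x_i$, as desired.

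The main (and only) substantive point is the computation that complex conjugation acts by $-1$ on $H^2(\CP^n;\Z)$; everything else is formal bookkeeping with K\"unneth decompositions and composition orders. No genuine obstruction arises, because Lemma~\ref{lemma:lemma1} has already cut the group of graded ring automorphisms of $R$ down to the group generated by coordinate permutations (preserving the multiset $\{n_i\}$) and coordinatewise sign changes, both of which are manifestly geometric.
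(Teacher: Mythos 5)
Your proposal is correct and takes essentially the same route as the paper: the paper's proof consists of invoking Lemma~\ref{lemma:lemma1} and declaring it ``clear'' that automorphisms of the form $x_i \mapsto \pm x_{\sigma(i)}$ (with $n_i = n_{\sigma(i)}$) are induced by self-diffeomorphisms, and you simply supply the explicit realization via factor permutations and complex conjugation that the paper leaves unstated. One cosmetic correction: with your definition $g_1(p_1,\ldots,p_m)=(p_{\sigma^{-1}(1)},\ldots,p_{\sigma^{-1}(m)})$ one computes $g_1^\ast(x_i)=x_{\sigma^{-1}(i)}$ rather than $x_{\sigma(i)}$, so you should either define $g_1$ using $\sigma$ in place of $\sigma^{-1}$ or run the construction with $\sigma^{-1}$ (which is harmless, since $n_i=n_{\sigma^{-1}(i)}$ holds as well).
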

\begin{proof}
It is clear that any automorphism $\psi$ on $\Z[x_1, \ldots, x_m] / <x_i^{N_i+1} \colon i=1, \ldots, m>$ of the form $\psi(x_i) = \pm x_{\sigma(i)}$ for some permutation $\sigma$ such that $n_i = n_{\sigma(i)}$ is actually induced by a self-diffeomorphism on $\prod_{i=1}^m \CP^{n_i}$. Hence the corollary follows from Lemma~\ref{lemma:lemma1}.
\end{proof}

We are now ready to prove Theorem~\ref{theorem:rigidity of product of projective space}.
Let $\varphi : H^\ast(M) \rightarrow H^\ast(\prod_{i=1}^m \C P^{n_i})$ be a graded ring isomorphism. By \cite[Theorem 1.1]{Ch-Ma-Su-2010}, there is a diffeomorphism $h:M \rightarrow \prod_{i=1}^m \C P^{n_i}$. Then we have
    \[
    \xymatrix{ H^\ast(M) \ar[r]^-{\varphi}& H^\ast(\prod_{i=1}^m \C P^{n_i}) \\
        H^\ast(\prod_{i=1}^m \C P^{n_i}) \ar[u]^{h^\ast} \ar[ur]_{\varphi \circ h^\ast} &
    }.
    \]
By Corollary~\ref{coro:automorphism_of_trivial_case}, there is a diffeomorphism
$$
    g: \prod_{i=1}^m \C P^{n_i} \rightarrow \prod_{i=1}^m \C P^{n_i}
$$
such that $g^\ast = \varphi \circ h^\ast$. Then $\varphi = (\varphi \circ h^\ast)\circ (h^\ast)^{-1} = g^\ast \circ (h^{-1})^\ast = (h^{-1} \circ g)^\ast = f^\ast$, where
$$
    f:= h^{-1} \circ g : \prod_{i=1}^m \C P^{n_i} \rightarrow M
$$
is a diffeomorphism such that $f^\ast = \varphi$.

\bigskip
\bibliographystyle{amsplain}
\providecommand{\bysame}{\leavevmode\hbox to3em{\hrulefill}\thinspace}
\providecommand{\MR}{\relax\ifhmode\unskip\space\fi MR }
\providecommand{\MRhref}[2]{%
  \href{http://www.ams.org/mathscinet-getitem?mr=#1}{#2}
}
\providecommand{\href}[2]{#2}

\end{document}